\documentclass{amsart}
\usepackage{amsfonts,amssymb,amscd,amsmath,enumerate,verbatim,calc}
\usepackage{xy}

\newcommand{\CM}{Cohen-Macaulay}

\newcommand{\n}{\mathfrak{n} }
\newcommand{\m}{\mathfrak{m} }

\newcommand{\rt}{\rightarrow}

\newcommand{\height}{\operatorname{height}}

\newcommand{\Syz}{\operatorname{Syz}}
\newcommand{\Tor}{\operatorname{Tor}}

\newcommand{\Supp}{\operatorname{Support}}

\newcommand{\Ext}{\operatorname{Ext}}

\theoremstyle{plain}

\newtheorem{theorem}{Theorem}[section]

\theoremstyle{definition}

\theoremstyle{remark}

\begin{document}

\title{localization of complete intersections}
\author{Tony~J.~Puthenpurakal}
\date{\today}
\address{Department of Mathematics, IIT Bombay, Powai, Mumbai 400 076}

\email{tputhen@math.iitb.ac.in}
\subjclass{Primary 13H10; Secondary 13D02}
\keywords{complete intersections, cohomology operators}
 \begin{abstract}
Let $(A,\m)$ be an abstract complete intersection and let $P$ be a prime ideal of $A$. In \cite{Av-1} Avramov proved that $A_P$ is a complete intersection. In this paper we give an elementary proof of this result.
\end{abstract}
 \maketitle
\section{introduction}
We say a local ring $A$ is \emph{geometric complete intersection}  if $A = Q/(f_1,\ldots, f_c)$ where $Q$ is regular local and $f_1,\ldots, f_c$ is a $Q$-regular sequence. We say $A$ is an \emph{abstract complete intersection} if the completion $\widehat{A}$ is a geometric complete intersection.    Geometric complete intersections are abstract complete intersections but the converse is \emph{not} true, see  \cite[section 2]{HJ}.

If $A$ is a geometric complete intersection and $P$ is a prime ideal in $A$  then it is elementary to show that $A_P$ is a geometric complete intersection. However the corresponding result for abstract complete intersection is difficult to show. In \cite{Av-1}, Avramov showed that if $(A,\m) \rt (B, \n)$ is a flat local map of Noetherian local rings then $B$ is a complete intersection if and only if $A$ and $B/\m B$ are complete intersections (also see \cite{Av-2}). Using this fact  it is not difficult to show that if $A$ is an abstract  complete intersection then so is $A_P$ for any prime ideal $P$ in $A$; see \cite[2.3.5]{BH}.

In this short paper we give an elementary proof of the following fact:
\begin{theorem}
  \label{main}
  Let $(A,\m)$ be an abstract complete intersection of codimension $c$ and let
  $P$ be a prime ideal in $A$. Then $A_P$ is  an abstract complete intersection of codimension $\leq c$
\end{theorem}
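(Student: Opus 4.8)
The plan is to descend the complete intersection property from a suitable localization of $\wh{A}$ to $A_P$ along a flat local map, reading everything off Poincaré series of residue fields.

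First I would set up the flat map. Since $A \rt \wh{A}$ is faithfully flat and local, some prime $\q$ of $\wh{A}$ contracts to $P$. Writing $\wh{A} = Q/(f_1,\ldots,f_c)$ and $\q = \mathfrak{Q}/(f_1,\ldots,f_c)$, the localization $\wh{A}_{\q} = Q_{\mathfrak{Q}}/(f_1,\ldots,f_c)Q_{\mathfrak{Q}}$ is a geometric complete intersection of some codimension $c' \leq c$, by the elementary localization statement recalled in the introduction. The induced homomorphism $A_P \rt \wh{A}_{\q}$ is flat and local; write $F = \wh{A}_{\q}/P\wh{A}_{\q}$ for its closed fibre, $k(P) = A_P/PA_P$, and $\ell$ for the residue field of $\wh{A}_{\q}$.

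Next I would invoke the multiplicativity of residue field Poincaré series along a flat local map,
$$P^{\wh{A}_{\q}}_{\ell}(t) \;=\; P^{A_P}_{k(P)}(t)\cdot P^{F}_{\ell}(t),$$
equivalently the additivity of deviations $\ve_n(\wh{A}_{\q}) = \ve_n(A_P) + \ve_n(F)$ for all $n$. Two standard facts about the Tate product expansion of $P^{A}_{k}(t)$ then finish the argument: every deviation $\ve_n$ is nonnegative, and a local ring is a complete intersection precisely when $\ve_n = 0$ for $n \geq 3$, in which case its codimension is $\ve_2$. Since $\wh{A}_{\q}$ is a complete intersection, $\ve_n(\wh{A}_{\q}) = 0$ for $n \geq 3$; nonnegativity and additivity force $\ve_n(A_P) = 0$ for $n \geq 3$, while $\ve_2(A_P) \leq \ve_2(\wh{A}_{\q}) = c' \leq c$. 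Hence $A_P$ is an abstract complete intersection of codimension $\leq c$.

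The two inputs that need real work — and where I expect the cohomology operators to enter — are the factorization formula above and the characterization of the complete intersection property by the shape of the Poincaré series. The hard part will be the latter: Tate's direction (complete intersection $\Rightarrow$ the stated rational form) is a direct resolution computation, but the converse, that a residue field with complete intersection Poincaré series forces the ring itself to be a complete intersection, is exactly where the operators $\chi_1,\ldots,\chi_c$ of degree $2$ are used, through the finite generation of $\Ext^{*}_{\wh{A}_{\q}}(\ell,\ell)$ over the polynomial ring they generate and the resulting polynomial growth of Betti numbers. I would therefore isolate a complexity lemma, reducing the theorem to the single inequality $\operatorname{cx}_{A_P}(k(P)) \leq \operatorname{cx}_{\wh{A}_{\q}}(\ell) \leq c$, which the factorization yields at once since all the power series involved have nonnegative coefficients. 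A secondary technical point to check carefully is the matching of the two residue fields $k(P) \hookrightarrow \ell$ in the multiplicativity formula.
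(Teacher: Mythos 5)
Your reduction to the flat local map $A_P \rt \wh{A}_{\q}$ is the standard route (it is essentially \cite[2.3.5]{BH}, which the introduction explicitly sets aside in favour of an elementary argument), but the step you lean on is not available: the multiplicativity $P^{B}_{\ell}(t) = P^{A}_{k}(t)\cdot P^{F}_{\ell}(t)$, equivalently the additivity of deviations, is \emph{false} for a general flat local homomorphism. Take $A = k[[x]]/(x^2)$ and $B = k[[y]]/(y^4)$ with $x \mapsto y^2$. Then $B$ is free of rank $2$ over $A$, the map is flat and local, and the closed fibre is $F = k[[y]]/(y^2)$. All three rings are hypersurfaces, so $P^{A}_{k}(t) = P^{B}_{k}(t) = P^{F}_{k}(t) = 1/(1-t)$, whereas your formula would demand $1/(1-t) = 1/(1-t)^2$; likewise $\ve_1(B) = 1 \neq 2 = \ve_1(A) + \ve_1(F)$. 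What holds in general is only the coefficientwise inequality $P^{B}_{\ell}(t) \preceq P^{A}_{k}(t)\cdot P^{F}_{\ell}(t)$ (built from a non-minimal resolution of $\ell$ assembled from resolutions over $A$ and over $F$), and that inequality bounds the wrong quantity: it cannot force $\ve_n(A_P) = 0$ from $\ve_n(\wh{A}_{\q}) = 0$. The statement that actually rescues your outline --- that for a flat local map $B$ is a complete intersection if and only if $A$ and $F$ are --- is precisely Avramov's theorem \cite{Av-1, Av-2}, proved by a much more delicate analysis of deviations (complete intersection defects), not by naive additivity. So as written the argument has a genuine gap, and filling it by citing Avramov would reproduce the known non-elementary proof rather than give a new one.

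For contrast, the paper avoids flat descent entirely. It reduces by catenarity to the case $\height(\m/P) = 1$, takes $M = \Syz^A_d(A/P)$, and uses the finite generation of $E = \Ext^*_A(M,M)$ over a polynomial ring $A[Y_1,\ldots,Y_m]$ in cohomology operators of degree $2$ (\ref{yoneda}). Inverting an element $f \in \m \setminus P$ makes each $\Ext^i$ ($i>0$) supported only at $PA_f$, so the Hilbert quasi-polynomial of \ref{hilbert} turns into a statement about $\ell_{A_P}(\Ext^i_{A_P}(\kappa(P),\kappa(P)))$, forcing $P_{A_P}(z)$ to be rational with denominator a power of $1-z^2$ and hence $\epsilon_3(A_P) = 0$ by \ref{GL}. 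Your closing remarks about complexity and the degree-$2$ operators show you sense where the real content lies; the fix is to run that complexity argument directly over $A$ and its localization, as the paper does, rather than through the completion.
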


\section{preliminaries}
In this section we discuss a few preliminary results that we need. Throughout all rings are Noetherian and all modules considered are finitely generated. If $M$ is an $A$-module then $\ell_A(M)$ denotes its length as an $A$-module.

\s \label{hilbert} Let $Q$ be a  ring (not necessarily local) and let $R = \bigoplus_{n \geq 0}R_{2n}$ be finitely generated $Q$-algebra generated by $R_2$. Let
$E = \bigoplus_{n \geq 0} E_n$ be an   $R$-module with $\ell_Q(E_n)$ finite for all $n \gg 0$. Then there exists a quasi-polynomial $p_E(z)$ of period two such that $p_E(n) = \ell_Q(E_n)$ for all $n \gg 0$.

\s\label{yoneda} Let $A$ be local and let $M$ be an $A$-module. Then \\ $\Ext^*_A(M, M) = \bigoplus_{n \geq 0}\Ext^i_A(M, M)$ is an $A$-algebra (by the Yoneda product; see \cite[III.5]{M}). This algebra is usually not a finitely generated. However if $A$ is an abstract local complete intersection then there exists finitely many central elements $u_1, \ldots, u_m \in \Ext^2_A(M, M)$ such that $\Ext^*_A(M, M)$ is finitely generated $S$-module where $S = $ the $A$-subalgebra of $\Ext^*_A(M, M)$ generated by $u_1, \ldots, u_m$; see \cite[4.9]{AGP}. In particular we may assume that $\Ext^*_A(M, M)$ is a finitely generated $A[Y_1, \ldots, Y_m]$-module for some variables $Y_i$ with $\deg Y_i = 2$ for $i = 1, \ldots, m$.

\s \label{length} Let $R$ be a  ring and let $E$ be an $A$-module with $\Supp(E) \subseteq \{ \n \}$ where $\n$ is a maximal ideal of $R$.  Set $B = R_\n$. Then
$\ell_R(E) = \ell_{B}(E_\n).$


\s \label{GL} Let $(A,\m)$ be local. Set $k = A/\m$. The Poincare series of $A$ is the formal power series $ P_A(z) = \sum_{i \geq 0} \ell_A(\Tor^A_i(k,k))z^i$. It is well-known, see \cite[3.1.3]{GL} that there exists uniquely determined non-negative integers $\epsilon_i$ with
\[
P_A(z) = \prod_{i = 0}^{\infty}\frac{(1 + z^{2i + 1})^{\epsilon_{2i}}}{(1- z^{2i+2})^{\epsilon_{2i+1}}}
\]
Set $\epsilon_i(A) = \epsilon_i$.
 Furthermore the following assertions are equivalent (see \cite[3.5.1]{GL})
 \begin{enumerate}[\rm (i)]
   \item $A$ is a complete intersection.
   \item $\epsilon_2(A) = 0$.
   \item $\epsilon_3(A) = 0$.
 \end{enumerate}

\s\label{catenary} An abstract complete intersection is a Gorenstein local ring. In particular it is Cohen-Macaulay and so universally catenary.

\s \label{ext-lemm}
Let $A$ be a Gorenstein local ring and let $0 \rt M \rt F \rt N \rt 0$ be an exact sequence with $F$ free. Then
$\Ext^i_A(M, M) = \Ext^i_A(N,N)$ for all $i \gg 0$.

\section{proof of Theorem \ref{main}}
In this section we give
\begin{proof}[Proof of Theorem \ref{main}]
We may assume $d = \dim A > 0$.

Case(1): $\height(\m/P) = 1$. \\  Let $f \in \m \setminus P$. Set
$R = A_f$. Then $\n = PA_f$ is a maximal ideal in $R$. Also let $\kappa(P)$ denote the residue field of $A_P$.

Set $M = \Syz^A_{d }(A/P)$; the $d^{th}$ syzygy of $A/P$.  By \ref{yoneda} the $A$-algebra \\
$E = \Ext^*_A(M, M)$ is finitely generated as a $S = A[Y_1, \ldots, Y_m]$-module with some variables $Y_1, \ldots,Y_m$ of degree $2$. Then $E_f$ is finitely generated as a \\ $S_f = A_f[Y_1, \ldots, Y_m]$-module.

Notice for $i > 0$ we have  $\Supp(\Ext_R^i(M_f, M_f)) \subseteq \{ \n \}$. It follows from \ref{hilbert} that there exists a   quasi-polynomial $p_E(z)$ of period two such that $p_E(n) = \ell(E_n)$ for all $n \gg 0$. Furthermore  by \ref{length}  we get $\ell_R(E_n) = \ell_{A_P}((E_n)_P)$ for $n > 0$.
As $A_P$ is Gorenstein we get that $\Ext^i_{A_P}(M_P, M_P)  = \Ext^i_{A_P}(\kappa(P), \kappa(P))$ for $i \gg 0$. It follows that
$$P_{A_P}(z) = \frac{h(z)}{(1-z^2)^r} \quad \text{for some $r \geq 0$ and $h(z)\in \mathbb{Z}[z]$}. $$
So $e_3(A_P) = 0$. It follows  from \ref{GL} that $A_P$ is an abstract complete intersection.

Case (2):
 $P$ is  a prime ideal in $A$ with $ r = \height(\m/P)> 1$. \\
   $A$ is \CM \ and so  catenary. It follows that there exists
prime ideals $P_i  $ such that $P = P_r \subseteq P_{r-1} \subseteq \cdots \subseteq P_1 \subseteq P_0 = \m$ and $\height(P_i/P_{i+1}) = 1$ for all $i$. By
Case (1)
$A_{P_1}$ is a complete intersection. As $\height_{A_{P_1}}(P_1A_{P_1}/P_2A_{A_{P_1}}) = 1$ it follows  again by Case (1) that
$$ A_{P_2} = (A_{P_1})_{P_2A_{P_1}}$$ is a complete intersection. Iterating we get that $A_P$ is an abstract complete intersection.

By considering a minimal resolution of $A/P$ over $A$ it follows easily that complexity of $\kappa(P)$ is $ \leq c$. It follows that
codimension of $A_P$ is $\leq c$.
\end{proof}

\end{document}